\definecolor{webgreen}{rgb}{0,.5,0}
\definecolor{webbrown}{rgb}{.6,0,0}
\newcommand{\seqnum}[1]{\href{https://oeis.org/#1}{\rm \underline{#1}}}
\begin{document}
	
\theoremstyle{plain}
\newtheorem{theorem}{Theorem}
\newtheorem{corollary}[theorem]{Corollary}
\newtheorem{lemma}[theorem]{Lemma}
\newtheorem{proposition}[theorem]{Proposition}

\theoremstyle{definition}
\newtheorem{definition}[theorem]{Definition}
\newtheorem{example}[theorem]{Example}
\newtheorem{conjecture}[theorem]{Conjecture}

\theoremstyle{remark}
\newtheorem{remark}[theorem]{Remark}

\begin{center}
\vskip 1cm{\LARGE\bf Evaluating $\zeta(s)$ At Odd Positive Integers Using Automatic Dirichlet Series} 
\vskip 1cm \large
L\'aszl\'o T\'oth\\
L-8476 Eischen \\
Grand Duchy of Luxembourg\\
\href{mailto:uk.laszlo.toth@gmail.com}{\tt uk.laszlo.toth@gmail.com}
\end{center}
\vskip .2 in

\begin{abstract}
In this paper, we use the Thue-Morse sequence and the paperfolding sequence to build a Dirichlet series that evaluates to a linear combination of the Riemann zeta function at odd positive integers and odd powers of $\pi$. In doing so, we also provide an alternative proof of a 2015 result by Allouche and Sondow.
\end{abstract}
	
\section{Introduction}

Let 
$$
\zeta(s) = \sum_{n\geq1} \frac{1}{n^s}
$$
denote the Riemann zeta function for positive integers $s>1$. The values of $\zeta$ at even integers $s=2k$ are expressible in terms of even powers of $\pi$ and the Bernoulli numbers $B_s$ thanks to Euler's classical formula (see, for instance, Apostol \cite[Thm.\ 12.17, p.\ 266]{Apostol76}),
$$
\zeta (2k)=\frac {(-1)^{k+1} B_{2k}(2\pi )^{2k}}{2(2k)!}.
$$
On the other hand, no such formula is known for odd integers $s=2k+1$, and several authors, such as Waldschmidt \cite{Waldschmidt10}, conjectured that for $k\geq1$, the numbers $\zeta(2k+1)$ and $\pi$ are algebraically independent. In fact, the arithmetic nature of $\zeta(2k+1)$ is still not fully understood. Ap\'ery \cite{Apery79} showed that $\zeta(3)$ is irrational and later, Rivoal \cite{Rivoal00} and Ball and Rivoal \cite{Ball01} proved that $\zeta(2k+1)$ is irrational for infinitely many $k\geq1$. Zudilin \cite{Zudilin01} then showed that at least one of the numbers $\zeta(5)$, $\zeta(7)$, $\zeta(9)$, $\zeta(11)$ is irrational, and then a year later \cite{Zudilin02} that for any integer $k\geq0$, at least one number in $\zeta(2k + 3), \zeta(2k + 5),\ldots, \zeta(16k + 7)$ is irrational. More recently, Rivoal and Zudilin \cite{Rivoal20} showed that there exist at least two irrational numbers amongst the $33$ odd zeta values $\zeta(5), \zeta(7),\ldots, \zeta(69)$.

Some representations of the zeta function at odd positive integers involve odd powers of $\pi$. One example, given by N\"orlund \cite[Eq.\ 81*, p.\ 66]{Norlund24}, is
$$
\zeta(2k+1)={\frac {(-1)^{k+1} (2\pi )^{2k+1}}{2(2k+1)!}} \int_0^1 B_{2k+1}(x) \cot (\pi x) dx,
$$
where the $B_{2k+1}$ are the Bernoulli polynomials. A recent proof of this equality was given by Cvijovi\'{c} and Klinowski \cite{Cvijovic02}.

Other formulas exist that are specific to certain odd integers. One such example is Ramanujan's \cite{Berndt89} classical formula for $\zeta(3)$,
$$
\zeta (3)={\frac {7}{180}}\pi ^{3}-2\sum _{k=1}^{\infty }{\frac {1}{k^{3}(e^{2\pi k}-1)}}.
$$
Plouffe \cite{Plouffe11} discovered several similar identities involving odd powers of $\pi$, including
$$
\zeta (7)={\frac {19}{57600}}\pi ^{7}-2\sum _{k=1}^{\infty }{\frac {1}{k^{7}(e^{2\pi k}-1)}}.
$$
However, no closed form formula exists today involving $\zeta(2k+1)$ and $\pi^{2k+1}$ akin to Euler's identity for $\zeta(2k)$ and $\pi^{2k}$.

\subsection{Scope of this paper}
In this paper we look at this problem by constructing Dirichlet series involving two binary sequences; first, the classical Thue-Morse sequence $(t_n)_{n\geq0}$, beginning with
$$
0, 1, 1, 0, 1, 0, 0, 1, \ldots ,
$$
available in the On-Line Encyclopedia of Integer Sequences (OEIS) \cite{oeis} as sequence \seqnum{A010060} and admitting the recurrence relation
$$
t_{2n} = t_n, \ \ \ \ t_{2n+1} = 1-t_n.
$$
Second, the paperfolding or ``dragon-curve'' sequence $(b_n)_{n\geq1}$, starting with
$$
0, 0, 1, 0, 0, 1, 1, 0, \ldots
$$
and defined by the recurrence
$$
b_{2n} = b_n, \ \ \ \ b_{4n+1} = 0, \ \ \ \ b_{4n+3} = 1,
$$
listed in the OEIS as \seqnum{A014707} (with a few variations, such as \seqnum{A014577}). A few examples of our results are shown below.
\begin{example}
We have the following equalities.
\begin{align*}
(i) \ \ \ \ &
\zeta(3) - \pi^3 = \sum_{n\geq1} \frac{8^{-1}(9 t_{n-1} + 7 t_n) + 28(2b_n-1)}{n^3}, \\
(ii) \ \ \ \ & \zeta(5) - \frac 53 \pi^5 = \sum_{n\geq1} \frac{32^{-1}(33 t_{n-1} + 31 t_n) + 496(2b_n-1)}{n^5}, \\
(iii) \ \ \ \ & \zeta(7) - \frac{122}{45} \pi^7 = \sum_{n\geq1} \frac{128^{-1}(129 t_{n-1} + 127 t_n) + 8128(2b_n-1)}{n^7}.
\end{align*}
\end{example}
In fact, we prove the following theorem.
\begin{theorem} \label{theo-zeta-pi}
For all positive integers $k\geq1$, we have
$$
\zeta(2k+1) - \frac{2^{2k-1} |E_{2k}| }{(2k)!}  \pi^{2k+1} = \sum_{n\geq1} \frac{N(n;k)}{n^{2k+1}},
$$
where
$$
N(n;k) = (2^{4k+1} - 2^{2k}) (2 b_n -1) + (2^{-({2k+1})}) ((2^{2k+1}+1) t_{n-1} + (2^{2k+1}-1) t_n)
$$
and $E_k$ is the $k^{\rm th}$ Euler number defined by
$$
\frac{1}{\cosh t} = \sum_{k\geq0} \frac{E_{k}}{k!} t^{k}.
$$
\end{theorem}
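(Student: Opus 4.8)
The plan is to split the summand $N(n;k)$ into its two natural pieces and treat each as an \emph{automatic} Dirichlet series whose self-similar structure lets it be summed in closed form. Fix $s=2k+1>1$, so that every series below converges absolutely and all regroupings by the parity of $n$ are legitimate. Write $N(n;k)=(2^{4k+1}-2^{2k})(2b_n-1)+2^{-s}\bigl((2^{s}+1)t_{n-1}+(2^{s}-1)t_n\bigr)$, and set $C(s)=\sum_{n\ge1}(2b_n-1)n^{-s}$ together with $V(s)=\sum_{n\ge1}2^{-s}\bigl((2^{s}+1)t_{n-1}+(2^{s}-1)t_n\bigr)n^{-s}$. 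I expect the Thue--Morse piece $V(s)$ to produce exactly $\zeta(s)$ and the paperfolding piece to produce the $\pi^{2k+1}$ term, so the two goals are to prove $V(s)=\zeta(s)$ and to evaluate $C(s)$ in closed form.

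For $V(s)$ the key algebraic observation is the rewriting $(2^{s}+1)t_{n-1}+(2^{s}-1)t_n=2^{s}(t_{n-1}+t_n)+(t_{n-1}-t_n)$, so that $V(s)=P(s)+2^{-s}Q(s)$ where $P(s)=\sum_{n\ge1}(t_{n-1}+t_n)n^{-s}$ and $Q(s)=\sum_{n\ge1}(t_{n-1}-t_n)n^{-s}$. I would then split $P(s)$ by the parity of $n$ using $t_{2m}=t_m$ and $t_{2m-1}=t_{2(m-1)+1}=1-t_{m-1}$. For odd $n=2m+1$ one has $t_{n-1}+t_n=t_{2m}+t_{2m+1}=1$, contributing $\sum_{m\ge0}(2m+1)^{-s}=(1-2^{-s})\zeta(s)$; for even $n=2m$ one has $t_{n-1}+t_n=1-t_{m-1}+t_m=1-(t_{m-1}-t_m)$, contributing $2^{-s}(\zeta(s)-Q(s))$. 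Adding the two gives $P(s)=\zeta(s)-2^{-s}Q(s)$, whence $V(s)=P(s)+2^{-s}Q(s)=\zeta(s)$. Notably $Q(s)$ never has to be evaluated: the coefficients $2^{s}\pm1$ are engineered precisely so that the troublesome odd-index sum cancels and the combination collapses to $\zeta(s)$. This cancellation is the part I expect to be the crux, both to discover and to justify carefully, the only side issue being the harmless index shift at $n=1$ (where $t_0=0$), which is controlled by absolute convergence.

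For the paperfolding piece I set $c_n=2b_n-1\in\{-1,1\}$, so that $c_{2n}=c_n$, $c_{4n+1}=-1$, and $c_{4n+3}=1$. Splitting $C(s)$ by parity yields $2^{-s}C(s)$ from the even indices, while the odd indices contribute $\sum_{m\ge0}\bigl((4m+3)^{-s}-(4m+1)^{-s}\bigr)=-\beta(s)$, where $\beta$ is the Dirichlet beta function. Thus $C(s)(1-2^{-s})=-\beta(s)$, i.e.\ $C(s)=-2^{s}\beta(s)/(2^{s}-1)$. Since $2^{4k+1}-2^{2k}=2^{2k}(2^{2k+1}-1)$, the paperfolding contribution telescopes to $(2^{4k+1}-2^{2k})C(2k+1)=-2^{4k+1}\beta(2k+1)$.

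Finally I would invoke the classical closed form $\beta(2k+1)=\dfrac{|E_{2k}|}{2^{2k+2}(2k)!}\,\pi^{2k+1}$, which is exactly the evaluation of $\beta$ at odd integers through the generating function of the Euler numbers, and which gives $2^{4k+1}\beta(2k+1)=\dfrac{2^{2k-1}|E_{2k}|}{(2k)!}\pi^{2k+1}$. Summing the two pieces then produces $\sum_{n\ge1}N(n;k)n^{-s}=V(s)+(2^{4k+1}-2^{2k})C(s)=\zeta(2k+1)-\dfrac{2^{2k-1}|E_{2k}|}{(2k)!}\pi^{2k+1}$, which is the claimed identity.
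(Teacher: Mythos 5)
Your proof is correct, but it follows a genuinely different route from the paper on both halves of the decomposition. For the Thue--Morse piece, the paper simply cites the identity $\sum_{n\geq1}\bigl((2^s+1)t_{n-1}+(2^s-1)t_n\bigr)n^{-s}=2^s\zeta(s)$ from an earlier publication, whereas you reprove it from scratch: the rewriting as $2^s(t_{n-1}+t_n)+(t_{n-1}-t_n)$, the parity split using $t_{2m}=t_m$ and $t_{2m-1}=1-t_{m-1}$, and the cancellation of the auxiliary series $Q(s)$ are all correct (including the $n=1$ boundary with $t_0=0$), so your argument makes the theorem self-contained on this side. For the paperfolding piece, the paper routes through the Hurwitz zeta value $\zeta(2k+1,3/4)$ and K\"olbig's closed form for $\psi^{(2k)}(3/4)$, which contains both a $\pi^{2k+1}$ term and a $\zeta(2k+1)$ term that must subsequently cancel; you instead work with $c_n=2b_n-1$ directly, obtain $(1-2^{-s})C(s)=-\beta(s)$ with $\beta$ the Dirichlet beta function, and invoke the classical evaluation $\beta(2k+1)=|E_{2k}|\pi^{2k+1}/(2^{2k+2}(2k)!)$; the constants check out, since $2^{4k+1}-2^{2k}=2^{2k}(2^{2k+1}-1)$ makes the contribution exactly $-2^{4k+1}\beta(2k+1)=-2^{2k-1}|E_{2k}|\pi^{2k+1}/(2k)!$. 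What each approach buys: yours is shorter and avoids any appearance of $\zeta(2k+1)$ in the paperfolding computation, relying only on Euler's standard evaluation of $\beta$ at odd integers; the paper's detour through $\zeta(s,3/4)$ yields reusable intermediate statements (its Lemmas on the Hurwitz value and on $\sum(2b_n-1)n^{-s}$, the alternative proof of the Allouche--Sondow identity, and the concluding remark about Catalan's constant via $\zeta(2,3/4)$), which is the main reason the paper takes the longer path.
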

We note that Dirichlet series whose coefficients are linear combinations of $2$-automatic sequences have been widely studied in the past. A few examples involving the Thue-Morse sequence are
$$
\sum_{n\geq0} \frac{\varepsilon_n}{(n+1)^s} = \sum_{k\geq1} 2^{-s-k} \binom{s+k-1}{k} \sum_{n\geq0} \frac{\varepsilon_n}{(n+1)^{s+k}},
$$
with $\varepsilon_n = (-1)^{t_n}$ for all $n\geq0$ and valid for all $\Re(s)>1$, due to Allouche and Cohen \cite{Allouche85}, which was subsequently continued by Allouche, Mend\`es France, and Peyri\`ere \cite{Allouche00}, and the identity
$$
\sum_{n\geq0} \frac{\varepsilon_n}{(n+1)^s} = \frac{1-2^s}{1+2^s} \sum_{n\geq1} \frac{\varepsilon_n}{n^s},
$$ 
which was shown by Allouche and Cohen \cite{Allouche85} as well as Alkauskas \cite{Alkauskas01} using different techniques. More recently, the present author \cite{Toth22} proved that
\begin{equation}\label{eq:toth}
\sum_{n\geq1} \frac{(2^s+1) t_{n-1} + (2^s-1) t_n}{n^s}  = 2^s \zeta(s),
\end{equation}
valid for all $\Re(s)>1$. On the other hand, Dirichlet series involving the paperfolding sequence also exist, and one particularly interesting example is
\begin{equation}\label{eq:allouchesondow}
\sum_{n\geq0}\frac{\beta_n}{(n+1)^{2k+1}} = \frac{\pi^{2k+1}|E_{2k}|}{(2^{2k+2}-2)(2k)!},
\end{equation}
where
$$
\beta_n = (-1)^{b_n},
$$
which is due to Allouche and Sondow \cite{AlloucheSondow15}. The reader may notice a resemblance to our identity in Theorem \ref{theo-zeta-pi}. In this paper, we provide an alternative proof of identity~\eqref{eq:allouchesondow} using a special case of the Hurwitz zeta function, which we also employ within our other proofs.

\subsection{Structure of this paper}
Our main result, Theorem \ref{theo-zeta-pi}, is the combination of several smaller results. First, we find a closed form expression for the Dirichlet series associated with the sequence $b_n$ using its recurrence relation, in terms of odd powers of $\pi$. We do so by using the Hurwitz zeta function and the polygamma function. Then, we employ an existing result on Dirichlet series associated with the sequence $t_n$. Combined, these results form our proof of Theorem \ref{theo-zeta-pi}.

\section{The Hurwitz zeta function with argument $\frac 34$}
Recall the Hurwitz zeta function,
$$
\zeta (s,a)=\sum _{n=0}^{\infty }{\frac {1}{(n+a)^{s}}},
$$
defined here for rational $a>0$ and $Re(s) > 1$. A central element in our proofs in this paper is a closed form for $\zeta(s, 3/4)$ at positive odd integers $s=2k+1$, and argument $\frac 34$. Some special values of $\zeta(2k+1,p/q)$ for integer $p$ and $q$ (including $\frac pq = \frac34$) were already studied by Adamchik \cite{Adamchik07} using generating functions of trigonometric functions and related results involving the $\psi$ function were given by K\"olbig \cite{Kolbig96.2} using functional properties of the polylogarithm. On the other hand, here we provide another, elementary proof of the following identity that is easily adaptable to other values of $p$ and $q$ for which the polygamma function admits a closed form in terms of known constants and functions.
\begin{lemma} \label{lemma-hurwitz}
For $k \in \mathbb{N}$, we have
$$
\zeta(2k+1,3/4) = - \frac{2^{2k-1}}{(2k)!}  \pi^{2k+1}|E_{2k}| + 2^{2k} (2^{2k+1}-1)\zeta(2k+1) ,
$$
where $E_k$ is the $k^{\rm th}$ Euler number.
\end{lemma}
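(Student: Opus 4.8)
The plan is to reduce the Hurwitz zeta value to a single polygamma value and then exploit two classical functional equations of the polygamma function. Recall that for an integer $s = 2k+1 \geq 2$ one has $\zeta(2k+1, a) = -\frac{1}{(2k)!}\,\psi^{(2k)}(a)$, where $\psi^{(m)}$ denotes the $m$-th derivative of the digamma function. Thus the lemma is equivalent to evaluating $\psi^{(2k)}(3/4)$, and the target formula predicts a ``zeta part'' proportional to $\zeta(2k+1)$ together with a ``transcendental part'' carrying the Euler number and the odd power of $\pi$. Isolating these two pieces will be the whole game.

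First I would combine two relations to isolate $\psi^{(2k)}(3/4)$. Differentiating the reflection formula $\psi(1-x) - \psi(x) = \pi\cot(\pi x)$ exactly $2k$ times and setting $x = 1/4$ gives $\psi^{(2k)}(3/4) - \psi^{(2k)}(1/4) = \pi\left.\frac{d^{2k}}{dx^{2k}}\cot(\pi x)\right|_{x=1/4}$, while the Gauss multiplication formula with $q=2$, differentiated $2k$ times, gives $\psi^{(2k)}(1/4) + \psi^{(2k)}(3/4) = 2^{2k+1}\psi^{(2k)}(1/2)$. Adding these eliminates the unwanted value $\psi^{(2k)}(1/4)$ and yields
$$
2\,\psi^{(2k)}(3/4) = 2^{2k+1}\psi^{(2k)}(1/2) + \pi\left.\frac{d^{2k}}{dx^{2k}}\cot(\pi x)\right|_{x=1/4}.
$$

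Next I would evaluate the two remaining quantities. The half-integer polygamma value is routine: since $\zeta(s,1/2) = (2^s-1)\zeta(s)$, we get $\psi^{(2k)}(1/2) = -(2k)!\,(2^{2k+1}-1)\,\zeta(2k+1)$, which after normalization produces exactly the $2^{2k}(2^{2k+1}-1)\zeta(2k+1)$ term. The step I expect to be the crux is the cotangent derivative at $1/4$. Here the key observation is the elementary identity $\cot\!\left(\tfrac{\pi}{4} + u\right) = \sec(2u) - \tan(2u)$, obtained by writing $\cot = \cos/\sin$, expanding the angle sum, and rationalizing. Substituting $u = \pi\left(x - \tfrac14\right)$ shows that near $x = 1/4$ the function $\cot(\pi x)$ is the sum of an \emph{even} function ($\sec$) and an \emph{odd} function ($\tan$) of the shifted variable $x - \tfrac14$. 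Consequently the even-order operator $\frac{d^{2k}}{dx^{2k}}$ evaluated at $x = 1/4$ annihilates the tangent term entirely and extracts from $\sec\!\left(2\pi\left(x-\tfrac14\right)\right)$ precisely the coefficient governed by the Euler number; using $\sec t = \sum_{j\geq 0}\frac{|E_{2j}|}{(2j)!}t^{2j}$ then gives $\left.\frac{d^{2k}}{dx^{2k}}\cot(\pi x)\right|_{x=1/4} = |E_{2k}|\,(2\pi)^{2k}$.

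Finally I would assemble the pieces. Plugging both evaluations into the boxed-off display above and applying $\zeta(2k+1,3/4) = -\psi^{(2k)}(3/4)/(2k)!$ collapses, after bookkeeping of the powers of $2$, to the claimed expression. The only genuine difficulty is the $\sec$--$\tan$ decomposition together with the parity argument that isolates the Euler number; everything else is either a standard functional equation or routine algebra. I would emphasize that this is exactly the route that ``easily adapts'' to other arguments $p/q$, since for any $p/q$ at which $\psi^{(m)}$ admits a closed form the same reflection-plus-multiplication mechanism applies verbatim.
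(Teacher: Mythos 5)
Your proof is correct, but it takes a genuinely different route from the paper. The paper's proof is essentially a two-step citation: it invokes the standard relation $\psi^{(2k)}(z) = -(2k)!\,\zeta(2k+1,z)$ and then quotes K\"olbig's closed form for $\psi^{(2k)}(3/4)$ verbatim, so the entire content of the lemma is outsourced to the reference. You instead derive that polygamma value from scratch: the even-order derivative of the reflection formula gives $\psi^{(2k)}(3/4)-\psi^{(2k)}(1/4)$, the duplication formula gives $\psi^{(2k)}(3/4)+\psi^{(2k)}(1/4)=2^{2k+1}\psi^{(2k)}(1/2)$, and adding eliminates the value at $1/4$. Your evaluation of the two remaining pieces checks out: $\zeta(s,1/2)=(2^s-1)\zeta(s)$ yields the zeta part, and the identity $\cot\left(\tfrac{\pi}{4}+u\right)=\sec(2u)-\tan(2u)$ together with the parity argument (the $2k$-th derivative of the odd $\tan$ part vanishes at the center, while $\sec t=\sum_{j\geq0}|E_{2j}|\,t^{2j}/(2j)!$ under the paper's convention $1/\cosh t=\sum_k E_k t^k/k!$) gives
$$
\left.\frac{d^{2k}}{dx^{2k}}\cot(\pi x)\right|_{x=1/4}=(2\pi)^{2k}|E_{2k}|,
$$
and the final bookkeeping reproduces exactly the K\"olbig expression $\psi^{(2k)}(3/4)=2^{2k-1}\left(\pi^{2k+1}|E_{2k}|-2(2k)!\,(2^{2k+1}-1)\zeta(2k+1)\right)$. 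What your approach buys is self-containment and transparency: it makes visible \emph{why} the Euler numbers appear (they are the Taylor coefficients of the even part of the shifted cotangent), and it substantiates the paper's claim that the method ``easily adapts'' to other rational arguments $p/q$, since the reflection-plus-multiplication mechanism is generic. What the paper's citation buys is brevity. One small presentational point: you should state explicitly that the $2k$-th derivative of an odd function is again odd (hence vanishes at the origin), since that parity step is the hinge of the whole computation.
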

\begin{proof}
Recall the classical relationship between the Hurwitz zeta function and the polygamma function $\psi^{(k)}(z)$ :
$$
\psi^{(k)}(z) = (-1)^{k+1} k! \zeta(k+1,z).
$$
Thus,
$$
\zeta(2k+1,3/4) = - \frac{1}{(2k)!} \psi^{(2k)}(3/4).
$$
The $\psi$-term has been known since K\"olbig \cite{Kolbig96}:
$$
\psi^{(2k)}(3/4) = 2^{2k-1}\left( \pi^{2k+1}|E_{2k}|-2(2k)!(2^{2k+1}-1)\zeta(2k+1) \right),
$$
where $E_k$ denotes the $k^{\rm th}$ Euler number and $\zeta(k)$ is the Riemann zeta function. Thus, we have
\begin{align*}
\zeta(2k+1,3/4) = - \frac{2^{2k-1}}{(2k)!} \left( \pi^{2k+1}|E_{2k}|-2(2k)!(2^{2k+1}-1)\zeta(2k+1) \right) ,
\end{align*}
which after simplification yields the desired identity.
\end{proof}
A few examples are shown below.
\begin{example}
We have
\begin{align*}
(i) \ \ \ \ \zeta(3,3/4) &= 28 \zeta(3) - \pi^3 , \\
(ii) \ \ \ \ \zeta(5,3/4) &= 496 \zeta(5) - \frac{5}{3} \pi^5.
\end{align*}
\end{example}

\section{Dirichlet series associated with $b_n$}
Now let
$$
\delta(s) = \sum_{n\geq1} \frac{b_n}{n^s},
$$
and note that $\delta(s)$ converges for $\Re(s) > 1$ since the sequence $(b_n)_{n\geq0}$ takes only finitely many values. In fact, it is connected to the Hurwitz zeta function with argument $\frac34$ as follows:
\begin{equation} \label{eq:delta}
(1-2^{-s}) \delta(s) = 4^{-s} \zeta(s, 3/4),
\end{equation}
which is valid for $\Re(s)>1$. We note that equation~\eqref{eq:delta} above can easily be proved by splitting the series to even ($2n$) and odd ($4n+1$, $4n+3$) indexes (see, e.g., Allouche and Shallit \cite[Ex.~27, p.~205]{AlloucheShallit03}). Now simply combining Lemma \ref{lemma-hurwitz} with equation~\eqref{eq:delta} gives us the following result.
\begin{lemma} \label{theo-main}
For all integers $k\geq1$, we have
$$
(2^{4k+1} - 2^{2k}) \sum_{n\geq1}\frac{2 b_n -1}{n^{2k+1}} = - \frac{2^{2k-1}}{(2k)!}  \pi^{2k+1}|E_{2k}|,
$$
where $E_n$ denotes the $n^{\rm th}$ Euler number.
\end{lemma}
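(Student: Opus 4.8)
The plan is to reduce the identity to the two results already established, namely the closed form for $\zeta(2k+1,3/4)$ in Lemma~\ref{lemma-hurwitz} and the relation~\eqref{eq:delta} linking $\delta(s)$ to $\zeta(s,3/4)$. Writing $s = 2k+1$ throughout, I would first observe that the left-hand sum splits as
$$
\sum_{n\geq1}\frac{2b_n-1}{n^{s}} = 2\delta(s) - \zeta(s),
$$
since $\sum_{n\geq1} n^{-s} = \zeta(s)$ and $\delta(s) = \sum_{n\geq1} b_n n^{-s}$. Thus the entire problem is to evaluate $2\delta(s)-\zeta(s)$ in closed form and then multiply by the prefactor $2^{4k+1}-2^{2k} = 2^{2k}(2^{2k+1}-1)$.

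Next I would use~\eqref{eq:delta} to write $\delta(s) = \dfrac{4^{-s}}{1-2^{-s}}\,\zeta(s,3/4)$ and substitute the closed form from Lemma~\ref{lemma-hurwitz}. It is convenient to split that closed form into its two natural pieces, the ``$\pi$-part'' $P = -\dfrac{2^{2k-1}}{(2k)!}\pi^{2k+1}|E_{2k}|$ and the ``$\zeta$-part'' $Z = 2^{2k}(2^{2k+1}-1)\zeta(2k+1)$, so that $\zeta(s,3/4) = P + Z$. A short computation with the powers of two gives, at $s=2k+1$,
$$
\frac{2\cdot 4^{-s}}{1-2^{-s}} = \frac{2^{-2k}}{2^{2k+1}-1}.
$$
Applying this factor to $Z$ produces exactly $\zeta(2k+1)$, which cancels the $-\zeta(s)$ term; what survives is
$$
2\delta(s)-\zeta(s) = \frac{2^{-2k}}{2^{2k+1}-1}\,P.
$$

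Finally I would multiply through by $2^{2k}(2^{2k+1}-1)$. The prefactor is designed precisely so that $2^{2k}(2^{2k+1}-1)\cdot\dfrac{2^{-2k}}{2^{2k+1}-1} = 1$, leaving $P$ on the right-hand side and hence the claimed identity. The proof is therefore purely algebraic once the two ingredients are in hand, and there is no analytic difficulty to overcome; the only real obstacle is bookkeeping the exponents correctly, in particular checking that $4^{-s}$, $2^{-s}$, and the factor $2^{2k}(2^{2k+1}-1)$ conspire to cancel all the zeta contributions and to reduce the surviving constant to exactly $1$. It is this clean cancellation of the $\zeta(2k+1)$ term, built into the choice of coefficient, that makes the statement hold.
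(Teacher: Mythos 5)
Your proposal is correct and follows essentially the same route as the paper: combine equation~\eqref{eq:delta} with Lemma~\ref{lemma-hurwitz} and check that the prefactor $2^{4k+1}-2^{2k}=2^{2k}(2^{2k+1}-1)$ exactly cancels the $\zeta(2k+1)$ contribution. The only difference is cosmetic --- you write the left side as $2\delta(s)-\zeta(s)$ and cancel constants directly, whereas the paper keeps everything as one Dirichlet series with coefficient $R(n;k)$ and rearranges it into $(2^{4k+1}-2^{2k})(2b_n-1)$ at the end.
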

\begin{proof}
Let $s=2k+1$. Taking equation~\eqref{eq:delta} and expanding the Hurwitz zeta function term on the right-hand side using Lemma \ref{lemma-hurwitz} gives
$$
4^{2k+1} (1-2^{-(2k+1)}) \delta(2k+1) = - \frac{2^{2k-1}}{(2k)!}  \pi^{2k+1}|E_{2k}| + 2^{2k} (2^{2k+1}-1)\zeta(2k+1) .
$$
Thus,
$$
4^{2k+1} (1-2^{-(2k+1)}) \delta(2k+1) - 2^{2k} (2^{2k+1}-1)\zeta(2k+1) = - \frac{2^{2k-1}}{(2k)!}  \pi^{2k+1}|E_{2k}|.
$$
Now letting
$$
R(n;k) = 4^{2k+1} (1-2^{-(2k+1)}) b_n - 2^{2k} (2^{2k+1}-1)
$$
and expanding the left-hand side gives us
$$
\sum_{n\geq1}\frac{R(n;k)}{n^{2k+1}} = - \frac{2^{2k-1}}{(2k)!}  \pi^{2k+1}|E_{2k}|.
$$
Finally, we can rearrange the terms is $R(n;k)$ in order to form
$$
R(n;k) = (2^{4k+1} - 2^{2k}) (2b_n-1),
$$
which concludes the proof.
\end{proof}
Note that the coefficients $(2^{4k+1} - 2^{2k})$ are available in the OEIS in sequence \seqnum{A079598}. The sequence begins
$$
8, 496, 8128, 130816, 2096128, 33550336, \ldots .
$$
A few examples of Lemma \ref{theo-main} with different values of $k$ are shown below.
\begin{example}
The following equalities hold.
\begin{align*}
(i) \ \ \ \ & 28 \sum_{n\geq1} \frac{2 b_n - 1}{n^3} = - \pi^3, \\
(ii) \ \ \ \ & 496 \sum_{n\geq1} \frac{2 b_n - 1}{n^5} = - \frac53 \pi^5, \\ 
(iii) \ \ \ \ & 8128 \sum_{n\geq1} \frac{2 b_n - 1}{n^7} = - \frac{122}{45} \pi^7.
\end{align*}
\end{example}
We conclude this section by providing an alternative proof for Allouche and Sondow's result, as mentioned in the introduction (equation \eqref{eq:allouchesondow}).
\begin{corollary}
Let $\beta_n = (-1)^{b_n}$ for all $n\geq0$. Then for all $k\geq1$, we have
$$
\sum_{n\geq0}\frac{\beta_n}{(n+1)^{2k+1}} = \frac{\pi^{2k+1}|E_{2k}|}{(2^{2k+2}-2)(2k)!},
$$
where $E_n$ denotes the $n^{\rm th}$ Euler number.
\end{corollary}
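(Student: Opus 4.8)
The plan is to deduce this directly from Lemma~\ref{theo-main}, since the corollary is essentially a sign-and-index reformulation of that lemma and requires no new analytic input. The one elementary observation that drives everything is that the paperfolding sequence is binary, $b_n \in \{0,1\}$, so that $\beta_n = (-1)^{b_n}$ equals $+1$ when $b_n = 0$ and $-1$ when $b_n = 1$. In both cases this is exactly $1 - 2b_n$, that is, $\beta_n = -(2b_n - 1)$. Hence the alternating Dirichlet series $\sum \beta_n / n^{2k+1}$ is, up to an overall sign, the series $\sum (2b_n - 1)/n^{2k+1}$ that Lemma~\ref{theo-main} already evaluates.

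First I would reindex the left-hand side of the asserted identity. Putting $m = n+1$ turns $\sum_{n \geq 0} \beta_n/(n+1)^{2k+1}$ into $\sum_{m \geq 1} \beta_{m-1}/m^{2k+1}$, matching the summation range $m \geq 1$ that appears in Lemma~\ref{theo-main}. Next I would substitute $2b_n - 1 = -\beta_n$ into that lemma, which flips the sign on the right and gives
$$
(2^{4k+1}-2^{2k}) \sum_{n\geq1}\frac{\beta_n}{n^{2k+1}} = \frac{2^{2k-1}}{(2k)!}\pi^{2k+1}|E_{2k}|.
$$
Dividing through and simplifying the constant by factoring $2^{4k+1}-2^{2k} = 2^{2k}(2^{2k+1}-1)$ yields
$$
\frac{2^{2k-1}}{2^{4k+1}-2^{2k}} = \frac{1}{2(2^{2k+1}-1)} = \frac{1}{2^{2k+2}-2},
$$
which is precisely the constant claimed in the corollary.

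I expect no genuine mathematical obstacle here: Lemma~\ref{theo-main} already carries all of the analytic weight (through Lemma~\ref{lemma-hurwitz} and the polygamma evaluation at $3/4$), and what remains is only the trivial binary identity $\beta_n = -(2b_n-1)$ together with the constant simplification above. The single point demanding real care is the reindexing, where one must confirm that the $n \geq 0$, $(n+1)^{2k+1}$ form in Allouche and Sondow's statement aligns, after the shift $m = n+1$, with the $n \geq 1$, $n^{2k+1}$ form used throughout the present paper, so that the two indexing conventions for $b_n$ agree and no spurious boundary term is introduced.
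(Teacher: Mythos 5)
Your proposal is correct and follows essentially the same route as the paper: apply the binary identity $\beta_n=(-1)^{b_n}=1-2b_n$ to rewrite Lemma~\ref{theo-main} as $(2^{4k+1}-2^{2k})\sum_{n\geq1}\beta_n/n^{2k+1}=\tfrac{2^{2k-1}}{(2k)!}\pi^{2k+1}|E_{2k}|$, then divide and simplify $2^{2k-1}/(2^{4k+1}-2^{2k})=1/(2^{2k+2}-2)$. Your explicit attention to the reindexing between the $\sum_{n\geq0}\beta_n/(n+1)^{2k+1}$ form and the paper's $\sum_{n\geq1}$ convention is in fact slightly more careful than the paper's own proof, which passes over that offset silently.
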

\begin{proof}
Notice that for any sequence $(s_n)_{n\geq0}$ with values in $\{0,1\}$, we have $(-1)^{s_n} = 1-2s_n$. Applying this to the sequence $b_n$ and using Lemma \ref{theo-main}, we have
$$
(2^{4k+1} - 2^{2k}) \sum_{n\geq1}\frac{\beta_n}{n^{2k+1}} = \frac{2^{2k-1}}{(2k)!}  \pi^{2k+1}|E_{2k}|,
$$
and dividing both sides by $(2^{4k+1} - 2^{2k})$ yields the desired identity.
\end{proof}

\section{Proof of Theorem \ref{theo-zeta-pi}}

We now have all the tools necessary to prove the theorem shown in the introduction.
\begin{proof}[Proof of Theorem \ref{theo-zeta-pi}]
Recall the identity~\eqref{eq:toth} that we mentioned in the introduction, that for all $\Re(s)>1$, we have
$$
\sum_{n\geq1} \frac{(2^s+1) t_{n-1} + (2^s-1) t_n}{n^s}  = 2^s \zeta(s),
$$
where $t_n$ denotes the Thue-Morse sequence. Taking this identity for odd positive integers $s=2k+1$ and isolating the zeta term gives
$$
2^{-({2k+1})} \sum_{n\geq1} \frac{(2^{2k+1}+1) t_{n-1} + (2^{2k+1}-1) t_n}{n^{2k+1}}  = \zeta(2k+1).
$$
Adding this to our equation in Lemma \ref{theo-main} gives
$$
\sum_{n\geq1} \frac{N(n;k)}{n^{2k+1}}  = \zeta(2k+1) - \frac{2^{2k-1}}{(2k)!}  \pi^{2k+1}|E_{2k}|,
$$
where
$$
N(n;k) = (2^{4k+1} - 2^{2k}) (2 b_n -1) + (2^{-({2k+1})}) ((2^{2k+1}+1) t_{n-1} + (2^{2k+1}-1) t_n).
$$
\end{proof}

\section{Conclusion and further work}

In this short paper we have constructed Dirichlet series whose coefficients are linear combinations of automatic sequences, which allowed us to find identities related to the Riemann zeta function at odd positive integers. Our results are easily adapted to other sequences that satisfy different recurrence relations, making it possible to find such closed forms for known constants and functions. For instance, a recurrence relation resulting in a closed form involving the Hurwitz zeta function at \textit{even} positive integers (as opposed to the results in this paper), would allow evaluating Catalan's constant $C$ in a similar way, using the well-known identity
$$
\zeta(2, 3/4) = \pi^2 - 8C,
$$
which is given for instance by K\"olbig \cite{Kolbig96}, in terms of the polygamma function.

\section{Acknowledgment}

The author is grateful to an anonymous referee, whose remarks and suggestions have improved the quality of this paper, as well as to Prof. Shallit for locating the reference to a formula in N\"orlund's work \cite[Eq.\ 81*, p.\ 66]{Norlund24}.

\bigskip
\hrule
\bigskip

\noindent 2020 {\it Mathematics Subject Classification}: Primary 11M41, Secondary 11B85, 68R15.

\noindent \emph{Keywords:} Dirichlet series, Thue-Morse sequence, paperfolding sequence, dragon-curve sequence, Riemann zeta function.

\bigskip
\hrule
\bigskip

\noindent (Concerned with sequences
\seqnum{A010060},
\seqnum{A014707},
\seqnum{A014577}, and
\seqnum{A079598}.)

\bigskip
\hrule
\bigskip

\end{document}